\def\spec{\mathrm{spec}}
\def\imp{\mathrm{imp}}
\newtheorem{theorem}{Theorem}
\newtheorem{definition}[theorem]{Definition}
\definecolor{light-gray}{gray}{0.3}
\definecolor{dark-gray}{gray}{0.6}
\definecolor{gray1}{gray}{0.2}
\definecolor{gray2}{gray}{0.4}
\definecolor{gray3}{gray}{0.6}
\definecolor{gray4}{gray}{0.8}
 \author{Jeffrey J. Beyerl
\thanks{Department of Mathematics, University of Central Arkansas, Conway, AR 72035
 \mbox{ email: \textsf{jbeyerl@uca.edu}}} }
\title{Stability of Critical $p$-Improper Interval Graphs}
\date{ }
\begin{document}

\maketitle
\begin{abstract}
A $p$-improper interval graph is an interval graph that has an interval representation in which no interval contains more than $p$ other intervals. A critical $p$-improper interval graph is $p-1$ improper when any vertex is removed. In this paper we investigate the spectrum of impropriety of critical $p$-improper interval graphs upon the removal of a single vertex, which is informally known as the stability of the graph.
\end{abstract}

\section{Introduction}
Interval graphs are a well studied class of graphs, having been classified and investigated thoroughly in the latter half of the 1900s. Any hereditary class of graphs necessarily has a minimal forbidden subgraph characterization, in the case of interval graphs that characterization is cordless cycles and asteroidal triples. Most introductory texts on graph theory such as \cite{West} will define the basic terminology needed. There are a number of other equivalent characterizations throughout the literature, such as \cite{IntervalGraphCharacterization1, IntervalGraphCharacterization2} and \cite{IntervalGraphCharacterization3}.

Proper interval graphs were introduced and classified in \cite{Roberts}. The classification is quite simple: in addition to being an interval graph, merely forbid the claw $K_{1,3}$. Both $q$-proper \cite{Proskurowski} and $p$-improper {\cite{Beyerl} interval graphs generalize this notion with 0-proper and 0-improper interval graphs being proper interval graphs.

In the same papers as above the classification for proper and improper interval graphs are investigated. The class of $q$-proper interval graphs has a rather straightforward classification: the additional minimal forbidden subgraph being a claw $K_{1,3}$ with one of the leaves replaced with the clique $K_{q+1}$. The classification for improper interval graphs appears to be somewhat complicated and to date has not been written down, although some partial results have been obtained in \cite{Beyerl} and \cite{Beyerl2}.

Because of the hereditary nature of improper interval graphs, every $(p-1)$-improper interval graph is also $p$-improper. Oftentimes we will want to specify a graph that is $p$-improper but not $(p-1)$-improper, and so we call such a graph \textit{exactly} $p$-improper.
 
The \textit{basepoint} of an improper interval graph, as well as \textit{exterior local components} and \textit{local components} are formally defined in \cite{Beyerl} in terms of some technical concepts which are not needed in this paper. We provide an informal illustration of these terms in Figure \ref{defns}.

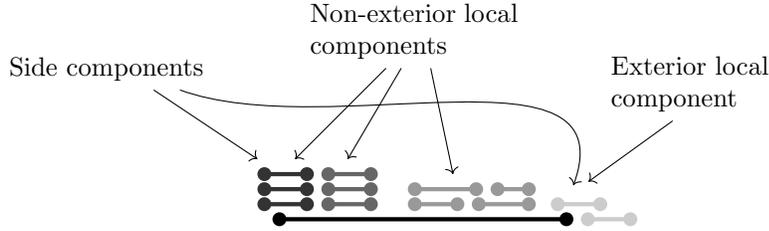
\begin{figure}
\begin{center}
\begin{tikzpicture}
[IntervalGraph/.style={{Circle[scale=0.7]}-{Circle[scale=0.7]}, ultra thick}]
	\draw [IntervalGraph] (-2,0) -- (2,0);
	%K_n left
	\draw [gray2, IntervalGraph] (-1.35,0.2) -- (-0.6,.2);
	\draw [gray2, IntervalGraph] (-1.35,0.4) -- (-0.6,.4);
	\draw [gray2, IntervalGraph] (-1.35,0.6) -- (-0.6,.6);
	%K_n right
	\draw [gray4, IntervalGraph] (1.7,0.2) -- (2.45,.2);	
	\draw [gray4, IntervalGraph] (2.1,0.0) -- (2.85,.0);	
	\draw [gray3, IntervalGraph] (0.65,0.2) -- (1.5,.2); %right
	\draw [gray3, IntervalGraph] (-.2,0.2) -- (.55,.2);  %left
	\draw [gray3, IntervalGraph] (-0.2,0.4) -- (0.8,0.4);
	\draw [gray3, IntervalGraph] (0.9,0.4) -- (1.5,0.4);	
	%K_n leftmost
	\draw [gray1, IntervalGraph] (-1.45,0.2) -- (-2.2,.2);
	\draw [gray1, IntervalGraph] (-1.45,0.4) -- (-2.2,.4);
	\draw [gray1, IntervalGraph] (-1.45,0.6) -- (-2.2,.6);
	
%	\draw[text width=3cm] (4, 1) node[above] {Exterior local \\ component};
	
	\node[text width=3cm] (EL) at (4, 1.8) {Exterior local \\ component};

	\node[text width=3cm] (NEL) at (0, 2.5) {Non-exterior local \\ components};

	\node[text width=3cm] (SC) at (-4, 2) {Side components};
	\draw[->] (SC) -- (-2.2,0.8);
	\draw[->] (NEL) -- (-1.7,0.8);
	\draw[->] (NEL) -- (-1,0.8);
	\draw[->] (NEL) -- (0.4,0.6);
	\draw[->] (EL) -- (2.2,0.5);
	\draw[->] (SC) to [out=-20, in=70] (2,0.45);
\end{tikzpicture}
\caption{The basepoint is in black. If removed, the remaining components are called local components. \textit{Basepoint}, \textit{exterior} and \textit{non-exterior} depend only on the graph. However, which local components are \textit{side components} depends on the representation.}
\label{defns}
\end{center}
\end{figure}

%\\ define basepoint, exterior local component, local component
\section{Instability of graphs with a fixed \\ impropriety}
The primary focus of this paper is the notion of \textit{stability} of improper interval graphs. By this we are asking what impropriety can be obtained by removing a vertex from an improper interval graph. For comparison, consider $q$-proper interval graphs, which are very stable: when a vertex from a critical exactly $q$-proper interval graph is removed, the result is either $0$-proper or $(q-1)$-proper. This can easily be seen by considering the minimal forbidden subgraph for $q$-proper interval graphs. For improper interval graphs, it is somewhat more complicated. We formalize the notion of stability in terms of the spectrum of impropriety below. 

\begin{definition}
Let $G$ be a critical $p$-improper interval graph. Then the spectrum of impropriety is the set of improprieties that can be exactly obtained from $G$ by the removal of a single vertex and is denoted $\spec_{\imp}(G)$. The spectrum of impropriety for the class of $p$-improper interval graphs is the set of improprieties that can be exactly obtained by the removal of a single vertex from some critical exactly $p$-improper interval graph and is denoted $\spec_{\imp}(p)$. 

Note in particular that the subscript $\imp$ is meant to disassociate this from the unrelated concept of the spectrum of a graph that consists of eigenvalues.
\end{definition}

The theorem below illustrates the instability of improper interval graphs by showing that given a critical $p$-improper interval graph, if a single vertex is removed there are no meaningful bounds on the impropriety of the resulting graph.

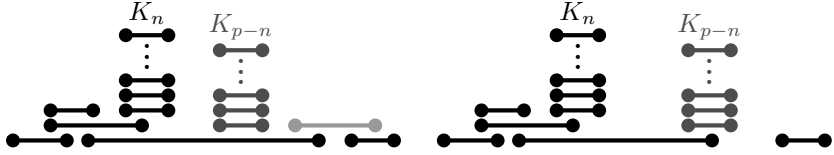
\begin{figure}
\begin{center}
\begin{tabular}{cc}
\begin{tikzpicture}
[IntervalGraph/.style={{Circle[scale=0.7]}-{Circle[scale=0.7]}, ultra thick}]
\draw [IntervalGraph] (-1.25,0) -- (2,0);
%left
\draw [IntervalGraph] (-2.25,0) -- (-1.35,0);
\draw [IntervalGraph] (-1.75,0.2) -- (-0.35,0.2);
\draw [IntervalGraph] (-1.75,0.4) -- (-1,0.4);
%K_n
\draw [IntervalGraph] (-.75,0.4) -- (0,.4);
\draw [IntervalGraph] (-.75,0.6) -- (0,.6);
\draw [IntervalGraph] (-.75,0.8) -- node[above] {\textbf{$\vdots$}} (0,.8);
\draw [IntervalGraph] (-.75,1.4) -- node[above]{$K_n$} (0,1.4);
%K_{p-n}
\draw [light-gray, IntervalGraph] (0.5,0.2) -- (1.25,.2);
\draw [light-gray, IntervalGraph] (0.5,0.4) -- (1.25,.4);
\draw [light-gray, IntervalGraph] (0.5,0.6) --node[above] {\textbf{$\vdots$}}  (1.25,.6);
\draw [light-gray, IntervalGraph] (0.5,1.2) -- node[above]{$K_{p-n}$} (1.25,1.2);
%right
\draw [IntervalGraph] (2.25,0) -- (3,0);
\draw [dark-gray, IntervalGraph] (2.75,0.2) -- (1.5,0.2);
\end{tikzpicture}
&
\begin{tikzpicture}
[IntervalGraph/.style={{Circle[scale=0.7]}-{Circle[scale=0.7]}, ultra thick}]
\draw [IntervalGraph] (-1.25,0) -- (1.5,0);
%left
\draw [IntervalGraph] (-2.25,0) -- (-1.35,0);
\draw [IntervalGraph] (-1.75,0.2) -- (-0.35,0.2);
\draw [IntervalGraph] (-1.75,0.4) -- (-1,0.4);
%K_n
\draw [IntervalGraph] (-.75,0.4) -- (0,.4);
\draw [IntervalGraph] (-.75,0.6) -- (0,.6);
\draw [IntervalGraph] (-.75,0.8) -- node[above] {\textbf{$\vdots$}} (0,.8);
\draw [IntervalGraph] (-.75,1.4) -- node[above]{$K_n$} (0,1.4);
%K_{p-n}
\draw [light-gray, IntervalGraph] (1,0.2) -- (1.75,.2);
\draw [light-gray, IntervalGraph] (1,0.4) -- (1.75,.4);
\draw [light-gray, IntervalGraph] (1,0.6) --node[above] {\textbf{$\vdots$}}  (1.75,.6);
\draw [light-gray, IntervalGraph] (1,1.2) -- node[above]{$K_{p-n}$} (1.75,1.2);
%right
\draw [IntervalGraph] (2.25,0) -- (3,0);
\end{tikzpicture}
\end{tabular}

\caption{In this graph, when the light grey interval is removed, the clique $K_{p-n}$ becomes a potential side component, leaving the resulting graph exactly $n$-improper.}
\label{dropToN}
\end{center}
\end{figure}

\begin{theorem}\label{mainThm}
$\spec_{\imp}(p)=\{0, 1, 2, ..., p-1\}$
\end{theorem}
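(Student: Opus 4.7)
The plan is to establish both inclusions of the set equality. The upper bound $\spec_{\imp}(p)\subseteq\{0,1,\dots,p-1\}$ is immediate from the definition of criticality: if $G$ is a critical exactly $p$-improper interval graph, then $G-v$ is at most $(p-1)$-improper for every vertex $v$, so its exact impropriety lies in $\{0,1,\dots,p-1\}$.

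For the reverse inclusion, for each $n\in\{0,1,\dots,p-1\}$ I would exhibit the graph $G_n$ sketched in Figure \ref{dropToN}. It has a distinguished base interval $b$ adjacent to two disjoint cliques $K_n$ and $K_{p-n}$, an exterior local component on the left anchoring $K_n$ inside $b$, a pendant vertex $r$ on the right, and the ``dark gray'' vertex $d$ adjacent to $b$ and $r$ but to no vertex of $K_{p-n}$. In the left half of the figure, $b$ contains both cliques, witnessing impropriety $p$; the right half exhibits a representation of $G_n-d$ in which $K_{p-n}$ slides out to become a side component and $b$ contains only $K_n$, witnessing impropriety exactly $n$.

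I would then verify three claims. First, $G_n$ is exactly $p$-improper: in any representation, the adjacencies of $d$ force $K_{p-n}$ to lie inside $b$, because otherwise $d$ would have to cross $K_{p-n}$ to reach $r$ while avoiding its vertices; symmetrically, the left exterior structure forces $K_n$ inside $b$. Second, $G_n$ is critical: removing any vertex of $K_n$ or $K_{p-n}$ shrinks the corresponding clique; removing a vertex from the left exterior structure, the pendant $r$, or $d$ itself releases the forcing and lets the corresponding clique slide out as a side component; and removing $b$ eliminates the only interval that contained $p$ others. Third, $G_n-d$ has exact impropriety $n$: the right half of Figure \ref{dropToN} achieves $n$, and since the left exterior structure survives, $K_n$ is still forced inside $b$, so no representation achieves less.

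The main obstacle will be making the ``forcing'' argument in the first claim fully rigorous, i.e.\ showing that in \emph{every} interval representation of $G_n$ (not just the one drawn), the non-adjacency of $d$ to $K_{p-n}$ combined with $d$'s adjacencies to $b$ and $r$ forces $K_{p-n}$ to lie strictly inside $b$, and symmetrically for $K_n$. This amounts to a case analysis of the possible left--right orderings of interval endpoints under the given adjacency constraints, and is most cleanly handled using the formalism of basepoints and exterior local components developed in \cite{Beyerl}.
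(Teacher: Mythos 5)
Your proposal follows essentially the same route as the paper: the same construction from Figure \ref{dropToN}, with the bridging vertex of the right-hand $P_2$ removed so that $K_{p-n}$ can slide out of the basepoint, and the same forcing argument via the left exterior component to pin $K_n$ (and, before the removal, $K_{p-n}$) inside the basepoint. You are in fact more explicit than the paper about the remaining obligations --- the paper only sketches the endpoint-ordering argument ($b_1<B<b_2<b_3$) for the forcing and does not verify criticality of the constructed graph --- so the items you flag as needing rigor are precisely the ones the published proof also leaves informal.
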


\begin{proof}
Given a number $0\leq n\leq p-1$, we will construct a critical $p$-improper interval graph from which we can remove a vertex to obtain an exactly $n$-improper interval graph. Such a graph is illustrated in Figure \ref{dropToN} and consists of a basepoint with two exterior local components: one a $P_2$, and the other a $K_3$ that is connected to a clique $K_{n}$ with one edge only. One of the other vertices in the $K_3$ is not adjacent to the basepoint. Without loss of generality, assume that the $K_3$ is to the left of the basepoint, so that we may represent its vertices as the intervals $(a,b_1), (a,b_2), (a,b_3)$ where $b_1<B<b_2<b_3$; $B$ is the left endpoint of the basepoint. Then if $C$ is the leftmost endpoint of intervals representing the clique $K_n$, we must have $b_2<C<b_3$ so that the vertices in the clique are adjacent to one but not more of the vertices in the $K_3$. Hence, the entire clique will be contained in the basepoint because $B<C$ and there is more than one local component. This is illustrated in Figure \ref{dropToN}. The third local component is a clique $K_{p-n}$. When the light grey vertex is removed, the graph has the representation given on the right of the figure that is now exactly $n$-improper.
\end{proof}

We also remark that the graph created in the above proof is not critical because of the remaining vertex not connected to the basepoint. A similar construction for $n\geq 1$ using cliques of size $K_{n-1}$ and $K_{p-n+1}$ will construct a critical $n$-improper interval graph when the other vertex in the $P_2$ side component is removed.

Note that the construction of a graph with full spectrum utilized exterior local components. Exterior local components often make interval graphs easier to analyze because they are simpler. If we do not allow exterior local components, we can still construct graphs that give the desired spectrum. For example, the class of graphs in Figure \ref{dropHalf} show that the spectrum includes $\{0, 1, 2, ..., \lfloor\frac{p}{2}\rfloor \}$; in Figure \ref{dropAll} we see that it has full spectrum. These two claims can be proven using similar methods as Theorem \ref{mainThm}.

Turning the problem on its head, it is sometimes possible to construct graphs with a given impropriety. For instance Figure \ref{DropToSeven} provides, for every $p\geq 8$, a class of critical $p$-improper interval graphs that have impropriety 7 after the removal of one vertex.

\begin{figure}
\begin{center}
\begin{tabular}{ccccc}
\begin{tikzpicture}
[IntervalGraph/.style={{Circle[scale=0.7]}-{Circle[scale=0.7]}, ultra thick}]
	\draw [IntervalGraph] (-1.15,0) -- (2,0);
	%left
	\draw [IntervalGraph] (-1.25,0.2) -- (1,0.2);
	%K_n 1	
	\draw [IntervalGraph] (-1.35,0.4) -- (-0.6,.4);
	\draw [IntervalGraph] (-1.35,0.6) -- (-0.6,.6);
	\draw [IntervalGraph] (-1.35,0.8) -- (-0.6,.8);
	\draw [IntervalGraph] (-1.35,1) -- node[above] {\textbf{$\vdots$}} (-0.6,1);
	\draw [IntervalGraph] (-1.35,1.6) -- node[above]{$K_{p-n}$} (-0.6,1.6);

	%K_n 2	
	\draw [light-gray, IntervalGraph] (-0.5,0.4) -- (0.25,.4);
	\draw [light-gray, IntervalGraph] (-.5,0.6) -- (0.25,.6);
	\draw [light-gray, IntervalGraph] (-.5,0.8) -- (0.25,.8);
	\draw [light-gray, IntervalGraph] (-.5,1) -- node[above] {\textbf{$\vdots$}} (0.25,1);
	\draw [light-gray, IntervalGraph] (-.5,1.6) -- node[above]{$K_{p-n}$} (0.25,1.6);

	%K_n 3	
	\draw [IntervalGraph] (.35,0.4) -- (1.1,.4);
	\draw [IntervalGraph] (.35,0.6) -- (1.1,.6);
	\draw [IntervalGraph] (.35,0.8) -- node[above] {\textbf{$\vdots$}} (1.1,.8);
	\draw [IntervalGraph] (.35,1.4) -- node[above]{$K_n$} (1.1,1.4);

	%right
	\draw [dark-gray, IntervalGraph] (1.5,0.2) -- (2.25,0.2);
\end{tikzpicture}
& & & &
\begin{tikzpicture}
[IntervalGraph/.style={{Circle[scale=0.7]}-{Circle[scale=0.7]}, ultra thick}]
	\draw [IntervalGraph] (-1.15,0) -- (2,0);
	%left
	\draw [IntervalGraph] (-1.25,0.2) -- (1,0.2);
	%K_n 1	
	\draw [IntervalGraph] (-1.35,0.4) -- (-0.6,.4);
	\draw [IntervalGraph] (-1.35,0.6) -- (-0.6,.6);
	\draw [IntervalGraph] (-1.35,0.8) -- (-0.6,.8);
	\draw [IntervalGraph] (-1.35,1) -- node[above] {\textbf{$\vdots$}} (-0.6,1);
	\draw [IntervalGraph] (-1.35,1.6) -- node[above]{$K_{p-n}$} (-0.6,1.6);

	%K_n 2	
	\draw [IntervalGraph] (-0.5,0.4) -- (0.25,.4);
	\draw [IntervalGraph] (-.5,0.6) -- (0.25,.6);
	\draw [IntervalGraph] (-.5,0.8) -- node[above] {\textbf{$\vdots$}} (0.25,0.8);
	\draw [IntervalGraph] (-.5,1.4) -- node[above]{$K_n$} (0.25,1.4);

	%K_n 3	
	\draw [light-gray, IntervalGraph] (.35,0.4) -- (2.25,.4);
	\draw [light-gray, IntervalGraph] (.35,0.6) -- (2.25,.6);
	\draw [light-gray, IntervalGraph] (.35,0.8) -- (2.25,.8);
	\draw [light-gray, IntervalGraph] (.35,1) -- node[above] {\textbf{$\vdots$}} (2.25,1);
	\draw [light-gray, IntervalGraph] (.35,1.6) -- node[above]{$K_{p-n}$} (2.25,1.6);

\end{tikzpicture}
\end{tabular}

\caption{In this graph, when the light grey interval is removed, the clique $K_{p-n}$ may be swapped to the right side and extended beyond the basepoint, leaving the resulting graph exactly $n$-improper, for $n\leq \lfloor \frac{p}{2} \rfloor$. The resulting graph is not critical, however, because it has two base points - one of which could be removed and it would still be exactly $n$-improper.}
\label{dropHalf}
\end{center}
\end{figure}
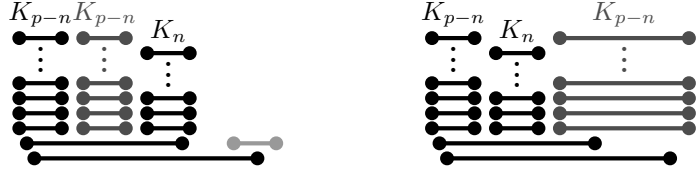

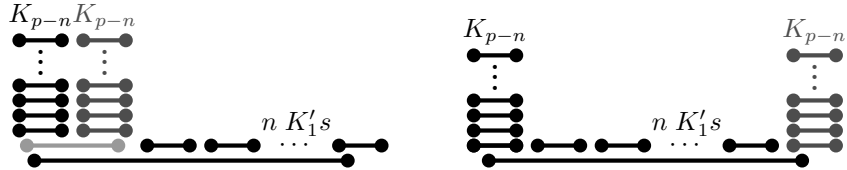
\begin{figure}
\begin{center}
\begin{tabular}{ccc}
\begin{tikzpicture}
[IntervalGraph/.style={{Circle[scale=0.7]}-{Circle[scale=0.7]}, ultra thick}]
	\draw [IntervalGraph] (-1.15,0) -- (3.2,0);
	%left
	\draw [dark-gray, IntervalGraph] (-1.25,0.2) -- (0.15,0.2);
	%K_n 1	
	\draw [IntervalGraph] (-1.35,0.4) -- (-0.6,.4);
	\draw [IntervalGraph] (-1.35,0.6) -- (-0.6,.6);
	\draw [IntervalGraph] (-1.35,0.8) -- (-0.6,.8);
	\draw [IntervalGraph] (-1.35,1) -- node[above] {\textbf{$\vdots$}} (-0.6,1);
	\draw [IntervalGraph] (-1.35,1.6) -- node[above]{$K_{p-n}$} (-0.6,1.6);

	%K_n 2	
	\draw [light-gray, IntervalGraph] (-0.5,0.4) -- (0.25,.4);
	\draw [light-gray, IntervalGraph] (-.5,0.6) -- (0.25,.6);
	\draw [light-gray, IntervalGraph] (-.5,0.8) -- (0.25,.8);
	\draw [light-gray, IntervalGraph] (-.5,1) -- node[above] {\textbf{$\vdots$}} (0.25,1);
	\draw [light-gray, IntervalGraph] (-.5,1.6) -- node[above]{$K_{p-n}$} (0.25,1.6);

	\draw [IntervalGraph] (0.35,0.2) -- (1.1,.2);
	\draw [IntervalGraph] (1.2,0.2) -- (1.95,.2);
	\draw [white] (2.05,0.2) -- node[black] {$\cdots$} node[above, black] {$n \;K_1's$} (2.8,.2);
	%right
	\draw [IntervalGraph] (2.9,0.2) -- (3.65,0.2);
\end{tikzpicture}
& &
\begin{tikzpicture}
[IntervalGraph/.style={{Circle[scale=0.7]}-{Circle[scale=0.7]}, ultra thick}]
	\draw [IntervalGraph] (-1.15,0) -- (3.2,0);
	%K_n 1	
	\draw [IntervalGraph] (-1.35,0.2) -- (-0.6,.2);
	\draw [IntervalGraph] (-1.35,0.4) -- (-0.6,.4);
	\draw [IntervalGraph] (-1.35,0.6) -- (-0.6,.6);
	\draw [IntervalGraph] (-1.35,0.8) -- node[above] {\textbf{$\vdots$}} (-0.6,.8);
	\draw [IntervalGraph] (-1.35,1.4) -- node[above]{$K_{p-n}$} (-0.6,1.4);

	%K_n 2	
	\draw [light-gray, IntervalGraph] (2.9,0.2) -- (3.65,.2);
	\draw [light-gray, IntervalGraph] (2.9,0.4) -- (3.65,.4);
	\draw [light-gray, IntervalGraph] (2.9,0.6) -- (3.65,.6);
	\draw [light-gray, IntervalGraph] (2.9,.8) -- node[above] {\textbf{$\vdots$}} (3.65,.8);
	\draw [light-gray, IntervalGraph] (2.9,1.4) -- node[above]{$K_{p-n}$} (3.65,1.4);

	\draw [IntervalGraph] (0.35,0.2) -- (1.1,.2);
	\draw [IntervalGraph] (-.5,0.2) -- (0.25,.2);
	\draw [white] (1.2,0.2) -- node[black] {$\cdots$} node[above, black] {$n \;K_1's$} (1.95,.2);
	\draw [IntervalGraph] (2.05,0.2) -- (2.8,.2);
	%right
	\draw [IntervalGraph] (-1.35,0.2) -- (-0.6,.2);
\end{tikzpicture}
\end{tabular}
\caption{In this graph, when the light grey interval is removed, the clique $K_{p-n}$ may be swapped to the right side, leaving the resulting graph exactly $n$-improper.}
\label{dropAll}
\end{center}
\end{figure}

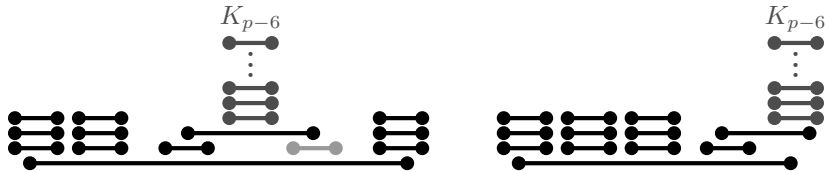
\begin{figure}
\begin{center}
\begin{tabular}{ccc}
\begin{tikzpicture}
[IntervalGraph/.style={{Circle[scale=0.7]}-{Circle[scale=0.7]}, ultra thick}]
	\draw [IntervalGraph] (-2,0) -- (3.2,0);
	%K_n left
	\draw [IntervalGraph] (-1.35,0.2) -- (-0.6,.2);
	\draw [IntervalGraph] (-1.35,0.4) -- (-0.6,.4);
	\draw [IntervalGraph] (-1.35,0.6) -- (-0.6,.6);
	%K_n right
	\draw [IntervalGraph] (2.65,0.2) -- (3.4,.2);	
	\draw [IntervalGraph] (2.65,0.4) -- (3.4,.4);	
	\draw [IntervalGraph] (2.65,0.6) -- (3.4,.6);	
	
	\draw [IntervalGraph] (.1,0.4) -- (1.95,.4); %big		
	\draw [dark-gray, IntervalGraph] (1.5,0.2) -- (2.25,.2); %right
	\draw [IntervalGraph] (-.2,0.2) -- (.55,.2);  %left
	\draw [light-gray, IntervalGraph] (.65,0.6) -- (1.4,.6);
	\draw [light-gray, IntervalGraph] (.65,0.8) -- (1.4,.8);
	\draw [light-gray, IntervalGraph] (.65,1) --  node[above] {\textbf{$\vdots$}}(1.4,1);
	\draw [light-gray, IntervalGraph] (.65,1.6) -- node[above]{$K_{p-6}$}(1.4,1.6);
	%K_n leftmost
	\draw [IntervalGraph] (-1.45,0.2) -- (-2.2,.2);
	\draw [IntervalGraph] (-1.45,0.4) -- (-2.2,.4);
	\draw [IntervalGraph] (-1.45,0.6) -- (-2.2,.6);
\end{tikzpicture}
& &
\begin{tikzpicture}
[IntervalGraph/.style={{Circle[scale=0.7]}-{Circle[scale=0.7]}, ultra thick}]
	\draw [IntervalGraph] (-2,0) -- (1.8,0);
	%K_n left
	\draw [IntervalGraph] (-1.35,0.2) -- (-0.6,.2);
	\draw [IntervalGraph] (-1.35,0.4) -- (-0.6,.4);
	\draw [IntervalGraph] (-1.35,0.6) -- (-0.6,.6);
	%K_n left2
	\draw [IntervalGraph] (-.5,0.2) -- (.25,.2);	
	\draw [IntervalGraph] (-.5,0.4) -- (.25,.4);	
	\draw [IntervalGraph] (-.5,0.6) -- (.25,.6);	
	%Main
	\draw [IntervalGraph] (0.7,0.4) -- (2.05,.4); %big		
	\draw [IntervalGraph] (0.5,0.2) -- (1.25,.2);  %left
	\draw [light-gray, IntervalGraph] (1.4,0.6) -- (2.15,.6);
	\draw [light-gray, IntervalGraph] (1.4,0.8) -- (2.15,.8);
	\draw [light-gray, IntervalGraph] (1.4,1) --  node[above] {\textbf{$\vdots$}}(2.15,1);
	\draw [light-gray, IntervalGraph] (1.4,1.6) -- node[above]{$K_{p-6}$}(2.15,1.6);
	%K_n leftmost
	\draw [IntervalGraph] (-1.45,0.2) -- (-2.2,.2);
	\draw [IntervalGraph] (-1.45,0.4) -- (-2.2,.4);
	\draw [IntervalGraph] (-1.45,0.6) -- (-2.2,.6);
\end{tikzpicture}
\end{tabular}
\caption{In this graph, when the light grey interval is removed, the clique $K_{p-6}$ may be swapped to the right side and extended beyond the basepoint, leaving the resulting graph exactly $7$-improper.}
\label{DropToSeven}
\end{center}
\end{figure}

\section{Instability of a fixed graph}

At the start of the previous section we defined the spectrum of impropriety for the class of $p$-improper interval graphs, $\spec_{\imp}(p)$ as well as the spectrum of impropriety of an individual graph $G$, $\spec_{\imp}(G)$. In this section we look at $\spec_{\imp}(G)$. If a balanced interval graph has two exterior local components, then $\spec_{\imp}(G)$ contains only a few numbers, given by the theorem below. If not, it is not entirely clear. We conjecture that $\left |\spec_{\imp}(G)\right |$ is unbounded but rather small in comparison to $p$; exactly how small (or large) is currently an open problem. 

\begin{theorem}
Suppose $G$ is a balanced improper interval graph with two exterior local components. Then $\left |\spec_{\imp}(G)\right |\leq 4$.
\end{theorem}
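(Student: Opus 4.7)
The plan is to partition $V(G)$ into at most four classes according to each vertex's role in the local-component structure, and to show that the impropriety of $G - v$ depends only on the class containing $v$. The four classes I would use are: (i) the basepoint, (ii) the left exterior local component, (iii) the right exterior local component, and (iv) the union of all non-exterior local components. Since non-exterior and exterior are graph-theoretic invariants (only side-componenthood is representation dependent), this partition is well-defined.

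First I would use the structural description of improper interval graphs from \cite{Beyerl} to pin down what an optimal interval representation of $G$ looks like. Every non-exterior local component must sit inside the basepoint, so its total size is a fixed contribution to the impropriety; the two exterior local components may a priori sit inside the basepoint or be pushed out as side components. Since $G$ is critical and exactly $p$-improper, the impropriety $p$ decomposes as the sum of the sizes of the non-exterior local components plus the contributions of the exterior local components that are forced inside. The balance hypothesis is what makes this decomposition rigid: it fixes which exterior components are free to become side components in representations of $G-v$.

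The heart of the proof would be the case analysis. For a vertex in class (ii), removing it shrinks the left exterior local component by one, and a swap-and-push argument analogous to those used in Figures \ref{dropToN}, \ref{dropHalf}, and \ref{dropAll} shows that the new impropriety depends only on the sizes of the remaining local components and not on which specific vertex of the component was deleted. By the balance hypothesis, class (iii) yields the symmetric value (and in many cases the same value). For a vertex in class (iv), the non-exterior components are forced to remain inside the basepoint even after deletion, so the impropriety drops by exactly one regardless of which component and which vertex is chosen. For the basepoint itself, removal yields a graph whose impropriety is fully determined by the individual local-component structure, giving one further value.

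The main obstacle is class (iv): a balanced graph with two exterior local components may still have several non-exterior local components of varying sizes, and I must rule out that vertex removals from different non-exterior components give genuinely different improprieties. I expect this to follow from an exchange argument built on the definitions in \cite{Beyerl}, showing that the total number of intervals contained in the basepoint after deletion is invariant under the choice of non-exterior component, and that the balance hypothesis blocks the appearance of any new side-component configuration that would further change the impropriety. Making this exchange argument rigorous, and confirming that the four class-values collapse to at most four distinct improprieties in $\spec_\imp(G)$, is where the real work of the proof lies.
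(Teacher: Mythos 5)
Your overall strategy --- partition the vertices by their structural role and show each class contributes at most one value to $\spec_{\imp}(G)$ --- is the same shape as the paper's argument, and your treatment of the non-exterior class matches the paper exactly: with two exterior local components still present after the deletion, every other local component remains trapped inside the basepoint in every representation, so the impropriety drops by exactly one no matter which non-exterior vertex is removed. The "main obstacle" you identify in class (iv) is therefore not where the difficulty lies.

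The genuine gap is in classes (ii) and (iii). You assert that a swap-and-push argument shows the impropriety of $G-v$ "depends only on the sizes of the remaining local components and not on which specific vertex of the component was deleted." For an exterior local component this is unjustified and, in general, false: the paper invokes Theorem 3.1 of \cite{Beyerl} to conclude that each exterior local component is a $P_2$, and the two vertices of that $P_2$ play asymmetric roles (one is adjacent to the basepoint, the other is not). Deleting the outer vertex leaves a pendant vertex on the basepoint; deleting the inner vertex isolates the outer one and destroys the exterior local component entirely, potentially freeing a large clique to escape the basepoint. These can yield different improprieties, so each exterior component contributes up to \emph{two} values, not one. Your accounting still lands on $4$ only because you undercount each exterior class as a single value while declining to fully use the left--right symmetry; the paper's count is instead $1$ (non-exterior) $+\,2$ (the two vertex types of a $P_2$, with the balance/symmetry hypothesis identifying the left component's pair of values with the right's) $+\,1$ (basepoint) $=4$. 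Without the $P_2$ structure theorem your argument has no control over how many values an exterior local component can contribute, and with it your claim that the contribution is one value is simply the wrong invariant.
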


\begin{proof}
Suppose $G$ is a balanced improper interval graph with two exterior local components. The fact that there are two exterior local components means that all of the other local components are contained within the basepoint in every interval representation. If a vertex on one of these non-exterior local components is removed, then the impropriety decreases by exactly one, as there are still two exterior local components keeping the other local components contained within the basepoint in every interval representation.

Theorem 3.1 in \cite{Beyerl} tells us that the exterior local components themselves are $P_2$. The two vertices in the $P_2$ each potentially give us another value for $\spec_{\imp}(G)$. By the symmetry of the two exterior local components, that is all we get from the $P_2$'s. The fourth value is from removing the only vertex we have not yet considered, the basepoint itself. 
\end{proof}

\bibliographystyle{plain}
\bibliography{References}

\end{document}